\newtheorem{theorem}{Theorem}[section]
\newtheorem{lemma}[theorem]{Lemma}
\newtheorem{corollary}[theorem]{Corollary}
\newtheorem{proposition}[theorem]{Proposition}
\newtheorem{remark}[theorem]{Remark}
\theoremstyle{definition}
\numberwithin{equation}{section}
\begin{document}

\title[2-Dimensional Kloosterman Sums Associated with General Linear Groups]{$\begin{array}{c}
         \text{A Recursive Formula for Power Moments of }\\
           \text{2-Dimensional Kloosterman Sums}\\
           \text{Associated with General Linear Groups}
       \end{array}$
}

\author{Dae San Kim and Seung-Hwan Yang}
\address{Department of Mathematics, Sogang University, Seoul 121-742, South Korea}
\curraddr{Department of Mathematics, Sogang University, Seoul
121-742, South Korea} \email{dskim@sogong.ac.kr,
yangdon83@lycos.co.kr}
\thanks{The work was supported by National Foundation of Korea Grant funded by the Korean Government (2009-0072514).}

\subjclass[2000]{}

\date{}

\dedicatory{ }

\keywords{}

\begin{abstract}
In this paper, we construct a binary linear code connected with the
Kloosterman sum for $GL(2,q)$. Here $q$ is a power of two. Then we
obtain a recursive formula generating the power moments
2-dimensional Kloosterman sum, equivalently that generating the even
power moments of Kloosterman sum in terms of the frequencies of
weights in the code. This is done via Pless power moment identity
and by utilizing the explicit expression of the Kloosterman sum for
$GL(2,q)$.

Index terms - recursive formula, power moment, Kloosterman sum,
2-dimensional Kloosterman sum, general linear group, Pless power
moment identity, weight distribution.
%
\end{abstract}

\maketitle
\section{Introduction}
Let $\psi$ be a nontrivial additive character of the finite field
$\mathbb{F}_q$ with $q=p^r$ elements ($p$ a prime), and let $m$ be a
positive integer. Then the $m$-dimensional Kloosterman sum
$K_m(\psi;a)$(\cite{RH}) is defined by
\begin{equation*}
K_m(\psi;a)=\sum_{\alpha_1,\cdots,\alpha_m \in \mathbb{F}_q^*}
\psi(\alpha_1+\cdots+\alpha_m+a\alpha_1^{-1}\cdots\alpha_m^{-1})~(a\in
\mathbb{F}_q^*).
\end{equation*}
In particular, if $m=1$, then $K_1(\psi;a)$ is simply denoted by
$K(\psi;a)$, and is called the Kloosterman sum. For this, we have
the Weil bound(cf. \cite{RH})
\begin{equation}\label{s1}
|K(\lambda;a)|\leq 2\sqrt{q}.
\end{equation}
The Kloosterman sum was introduced in 1926(\cite{K1}) to give an
estimate for the Fourier coefficients of modular forms.

\noindent For each nonnegative integer $h$, we will denote the
$h$-th moment of the $m$-dimensional Kloosterman sum $K_m(\psi;a)$
by $MK_m(\psi)^h$. Namely, it is given by

\begin{equation*}
MK_m(\psi)^h=\sum_{a\in\mathbb{F}_q^*}K_m(\psi;a)^h.
\end{equation*}
If $\psi=\lambda$ is the canonical additive character of
$\mathbb{F}_q$, then $MK_m(\lambda)^h$ will be simply denoted by
$MK^h_m$. If further $m=1$, for brevity $MK^h_1$ will be indicated
by $MK^h$.

Explicit computations on power moments of Kloosterman sums were
begun with the paper \cite{HS} of Sali$\acute{e}$ in 1931, where he
showed, for any odd prime $q$,
\begin{equation*}
MK^h=q^2M_{h-1}-(q-1)^{h-1}+2(-1)^{h-1}~(h\geq1).
\end{equation*}
Here $M_0=0$, and for $h\in\mathbb{Z}_{>0}$,
\begin{equation*}
M_h=\Big|\Big\{(\alpha_1,\cdots,\alpha_h)\in(\mathbb{F}^*_q)^h|
\sum_{j=1}^{h}\alpha_j=1=\sum_{j=1}^{h}\alpha_{j}^{-1}\Big\}\Big|.
\end{equation*}
For $q=p$ odd prime, Sali$\acute{e}$ obtained $MK^1$, $MK^2$,
$MK^3$, $MK^4$ in that same paper by determining $M_1$, $M_2$,
$M_3$.

From now on, let us assume that $q=2^r$. Carlitz\cite{L2} evaluated
$MK^h$ for $h\leq4$. Moisio was able to find explicit expressions of
$MK^h$, for $h\leq10$ (cf. \cite{M1}). This was done, via Pless
power moment identity, by connecting moments of Kloosterman sums and
the frequencies of weights in the binary Zetterberg code of length
$q+1$, which were known by the work of
 Schoof and Vlugt in \cite{RM}.

Simple identities involving Kloosterman sums, multi-dimensional
Kloosterman sums and powers of Kloosterman sums were used in
\cite{DS4} and \cite{DS5} to construct binary linear codes
associated with them. And then they were used to obtain recursive
formulas generating power moments of Kloosterma sums, $m$-multiple
power moments of Kloosterman sums and power moments of
multi-dimensional Kloosterman sums.

In this paper, along the line of the previous papers \cite{DS4} and
\cite{DS5}, we will utilize one simple identity connecting the
Kloosterman sum for $GL(2,q)$ and the ordinary Kloosterman sum(cf.
(\ref{s8})). Then we will be able to produce a recursive formula
generating the power moments of 2-dimensional Kloosterman sums,
equivalently that generating the even power moments of Kloosterman
sums. To do that, we construct a binary linear code connected with
the Kloosterman sum for $GL(2,q)$.

Theorem \ref{1} of the following(cf. (\ref{s2})-(\ref{s4})) is the
main result of this paper. Henceforth, we agree that the binomial
coefficient $\binom{b}{a}=0$, if $a>b$ or $a<0$.

\begin{theorem}\label{1}
Let $q=2^r$. Then we have the following:

\noindent $($a$)$ For $r\geq2$, and $h=1,2,\cdots,$
\begin{equation}\label{s2}
\begin{split}
MK_2^{h}
&=\sum_{l=0}^{h-1}(-1)^{h+l+1}\binom{h}{l}(q^3-2q^2-q+1)^{h-l} MK_2^{l}\\
&\quad+q^{1-h}\sum_{j=0}^{min\{N,h\}}(-1)^{h+j} C_j \sum_{t=j}^{h}
t! S(h,t)2^{h-t}\binom{N-j}{N-t},
\end{split}
\end{equation}

\noindent $($b$)$ For $r\geq2$, and $h=1,2,\cdots,$
\begin{equation}\label{s3}
\begin{split}
MK^{2h}
&=\sum_{l=0}^{h-1}(-1)^{h+l+1} \binom{h}{l}(q^3-2q^2+1)^{h-l}MK^{2l}\\
&\quad+q^{1-h}\sum_{j=0}^{min\{N,h\}}(-1)^{h+j} C_j \sum_{t=j}^{h}
t! S(h,t)2^{h-t}\binom{N-j}{N-t},
\end{split}
\end{equation}
where $N=|GL(2,q)|=q(q-1)(q^2-1)$, and $\{C_j\}^N_{j=0}$ is the
weight distribution of $C(GL$$(2,q))$ given by
\begin{equation}\label{s4}
C_j=\sum\binom{m_0}{\nu_0}\prod_{|t|<2\sqrt{q},~t\equiv-1(4)}\prod_{K(\lambda;\beta^{-1})=t}
\binom{m_t}{\nu_\beta}~(j=0,\cdots,N),
\end{equation}
with the sum running over all the sets of nonnegative integers
$\{\nu_\beta\}_{\beta\in\mathbb{F}_q}$ satisfying
\begin{equation*}
\sum_{\beta\in\mathbb{F}_q}\nu_\beta=j~ and~
\sum_{\beta\in\mathbb{F}_q}\nu_\beta\beta=0,
\end{equation*}
\begin{equation*}
m_0=q(2q^2-2q-1),
\end{equation*}
and
\begin{equation*}
m_t=q(q^2-2q-1+t),
\end{equation*}
for all integers $t$ satisfying $|t|<2\sqrt{q}$ and $t\equiv-1(mod~
4)$.

In addition, $S(h,t)$ is the Stirling number of the second kind
given by
\begin{equation}\label{s5}
S(h,t)= \frac{1}{t!} \sum_{j=0}^{t}(-1)^{t-j}\binom{t}{j} j^{h}.
\end{equation}
\end{theorem}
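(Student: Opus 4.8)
The plan is to construct the binary linear code $C(GL(2,q))$ explicitly and then apply the Pless power moment identity. First I would recall the known explicit expression for the Kloosterman sum for $GL(2,q)$: by the identity referred to as (\ref{s8}), the sum over $GL(2,q)$ attached to $\lambda(\mathrm{tr}(ag)+\mathrm{tr}(g^{-1}))$ (or whatever normalization the authors fix) decomposes, after sorting group elements by the characteristic-polynomial / conjugacy-class data, into a weighted combination of ordinary Kloosterman sums $K(\lambda;\beta^{-1})$ and elementary terms. The code $C(GL(2,q))$ is defined as the $\mathbb{F}_2$-linear code of length $N=|GL(2,q)|$ whose codewords are $c(a)=(\mathrm{Tr}_{q/2}(\text{exponent at }g))_{g\in GL(2,q)}$ for $a$ ranging over $\mathbb{F}_q$ (plus possibly the all-ones coordinate functional), so that the Hamming weight $w(c(a))$ is $\tfrac12(N - (\text{the }GL(2,q)\text{ Kloosterman-type sum at }a))$ up to the constant term. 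This is exactly the step where one needs the multiplicities $m_0=q(2q^2-2q-1)$ and $m_t=q(q^2-2q-1+t)$: they count how many $g\in GL(2,q)$ contribute a fixed value of the relevant trace functional, and these come from the conjugacy class sizes in $GL(2,q)$ (central, non-semisimple unipotent-type, split semisimple, non-split semisimple) together with the value $K(\lambda;\beta^{-1})=t$ on each split class.

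Next I would compute the relation between $MK_2^h$ and the power sums $\sum_a S(a)^h$ where $S(a)$ is the $GL(2,q)$-sum, using the linear identity (\ref{s8}) that writes $S(a)$ as an affine function of $K_2(\lambda;a)$, or $K(\lambda;a)^2$; concretely $S(a) = q\,K_2(\lambda;a) + (\text{lower order})$ or similar, which after binomial expansion converts a recursion in $\sum_a S(a)^h$ into the recursion (\ref{s2}) for $MK_2^h$, and then via $K_2(\lambda;a)=K(\lambda;a)^2-q$ (the standard 2-dimensional reduction, see \cite{RH}) into (\ref{s3}) for $MK^{2h}$. The constants $q^3-2q^2-q+1$ and $q^3-2q^2+1$ should drop out as the appropriate evaluations governing the affine shift, and the change of basis between "power of a shifted sum" and "shifted power sum" is where the alternating binomial sums $\sum_{l=0}^{h-1}(-1)^{h+l+1}\binom{h}{l}(\cdots)^{h-l}$ originate.

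Then comes the Pless power moment identity for the (dual of the) code $C(GL(2,q))$: for a binary $[N,k]$ code with weight distribution $\{C_j\}$ and dual distances, the identity expresses $\sum_j C_j \, j^h$ (more precisely $\sum_\nu (\text{falling factorial})$) in terms of the number of codewords of the dual of each small weight, and the small-weight codewords of the dual correspond precisely to solutions of $\sum_\beta \nu_\beta=j$, $\sum_\beta \nu_\beta \beta=0$ — this is what produces formula (\ref{s4}) for $C_j$ as a product of binomials $\binom{m_0}{\nu_0}\prod \binom{m_t}{\nu_\beta}$. Feeding the expression for $w(c(a))$ in terms of $MK_2^h$ into the left side of Pless and rearranging with the Stirling numbers $S(h,t)$ (to convert between ordinary powers $j^h$ and falling factorials $j(j-1)\cdots$, explaining the factor $t!\,S(h,t)2^{h-t}\binom{N-j}{N-t}$) yields the second summand of both (\ref{s2}) and (\ref{s3}).

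The main obstacle I expect is the bookkeeping in the first, structural step: correctly enumerating the conjugacy classes of $GL(2,q)$ in characteristic $2$, pinning down on which classes the defining functional of the code reduces to an honest Kloosterman sum $K(\lambda;\beta^{-1})$ versus a trivial value, and verifying that the resulting multiplicities are exactly $m_0=q(2q^2-2q-1)$ and $m_t=q(q^2-2q-1+t)$ with $t\equiv -1\ (\mathrm{mod}\ 4)$ — the congruence condition reflecting that $K(\lambda;\beta^{-1})\equiv -1\ (\mathrm{mod}\ 4)$ for $q$ even, a fact I would cite from the literature on Kloosterman sums over characteristic-two fields. Once the code and its weight formula (\ref{s4}) are correctly set up, the remaining steps — the affine identity (\ref{s8}), the binomial/Stirling manipulations, and the Pless identity — are essentially the same as in \cite{DS4} and \cite{DS5} and should go through routinely.
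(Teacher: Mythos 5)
Your overall skeleton matches the paper's: define a length-$N$ binary code from the values $Tr\,g+Tr\,g^{-1}$ on $GL(2,q)$, express the weights of the dual codewords through the identity $K_{GL(2,q)}(\lambda;a)=qK(\lambda;a)^2+q^2(q-1)=qK_2(\lambda;a)+q^3$, and run the Pless power moment identity with a binomial/Stirling expansion. But two of your steps have real problems. First, the substantive content of the theorem is precisely the multiplicity count $n(\beta)=|\{g\in GL(2,q):Tr\,g+Tr\,g^{-1}=\beta\}|$, which must come out to $q(2q^2-2q-1)$ for $\beta=0$ and $q(q^2-2q-1+K(\lambda;\beta^{-1}))$ for $\beta\neq0$; you only sketch a conjugacy-class enumeration for this and yourself flag it as the main obstacle, and your description of it (the value $t$ being attached ``on each split class'') is not how the dependence on $K(\lambda;\beta^{-1})$ arises. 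The paper avoids class enumeration entirely: it Fourier-inverts, $qn(\beta)=N+\sum_{\alpha\neq0}\lambda(-\alpha\beta)K_{GL(2,q)}(\lambda;\alpha^2)$, substitutes $K_{GL(2,q)}(\lambda;\alpha^2)=qK_2(\lambda;\alpha)+q^3$ (using $K_2(\lambda;\alpha^2)=K_2(\lambda;\alpha)$), and then applies the known identity $\sum_{\alpha\neq0}\lambda(-\alpha\beta)K_2(\lambda;\alpha)=qK(\lambda;\beta^{-1})-1$ (Proposition \ref{11}); the congruence $t\equiv-1\ (\mathrm{mod}\ 4)$ then enters only through the Lachaud--Wolfmann range theorem, as you correctly guessed.

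Second, you have the logic of the Pless step backwards: formula (\ref{s4}) for $C_j$ is \emph{not} produced by the Pless identity. It is obtained by directly counting codewords of $C$ itself of weight $j$ --- choose $\nu_\beta$ of the $n(\beta)$ coordinates with value $\beta$, subject to $\sum_\beta\nu_\beta=j$ and $\sum_\beta\nu_\beta\beta=0$ --- and then rewriting $\prod_\beta\binom{n(\beta)}{\nu_\beta}$ via the range theorem. The Pless identity is applied to the dual code $C^{\perp}$: its left-hand side is $\sum_{a\in\mathbb{F}_q^{*}}w(c(a))^h$, which the weight formula turns into the binomial combination of the moments $MK_2^{l}$ (resp.\ $MK^{2l}$), while the $C_j$ appear on the right-hand side as the weight distribution of $(C^{\perp})^{\perp}=C$. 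Related to this, you never address the injectivity of $a\mapsto c(a)$ for $q\geq4$ (the paper's Lemma \ref{7} and Proposition \ref{9}), which is needed both to identify the weight enumerator of $C^{\perp}$ with the multiset $\{w(c(a)):a\in\mathbb{F}_q\}$ and to know $\dim_{\mathbb{F}_2}C(GL(2,q))^{\perp}=r$, which is where the factors $q^{1-h}$ and $2^{h-t}$ in (\ref{s2})--(\ref{s3}) come from after dividing by $(q/2)^h$ and separating the $l=h$ term. Until the count $n(\beta)$ is actually established and the Pless bookkeeping is straightened out, the proposal does not yet constitute a proof.
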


\section{Preliminaries}
Throughout this paper, the following notations will be used:

\begin{equation*}
\begin{split}
q&=2^r~(r\in\mathbb{Z}_{>0}),\\
\mathbb{F}_q&=\text{the finite field with $q$
 elements},\\
tr(x)&=x+x^2+\cdots+x^{2^{r-1}}~ \text{the trace
 function} ~\mathbb{F}_q \rightarrow \mathbb{F}_2,\\
\lambda(x)&=(-1)^{tr(x)}~ \text{the canonical additive
 character of}~ \mathbb{F}_q.
\end{split}
\end{equation*}
Then any nontrivial additive character $\psi$ of $\mathbb{F}_q$ is
given by $\psi(x)=\lambda(ax)$, for a unique $a\in \mathbb{F}_q^*$.

For any nontrivial additive character $\psi$ of $\mathbb{F}_q$ and
$a\in\mathbb{F}_q^*$, the Kloosterman sum $K_{GL(t,q)}(\psi;a)$ for
$GL(t,q)$ is defined as
\begin{equation*}
K_{GL(t,q)}(\psi;a)=\sum_{g\in GL(t,q)} \psi(Tr{g}+aTr{g^{-1}}).
\end{equation*}

\noindent Observe that, for $t=1$,$K_{GL(1,q)}(\psi;a)$ denotes the
Kloosterman sum $K(\psi;a)$.

In \cite{DS3}, it is shown that $K_{GL(t,q)}(\psi;a)$ satisfies the
following recursive relation: \noindent for integers $t \geq 2$,
$a\in\mathbb{F}_q^*$,
\begin{equation}\label{s6}
\begin{split}
K_{GL(t,q)}(\psi;a)=q^{t-1}K_{GL(t-1,q)}&(\psi;a)K(\psi;a)\\
&+q^{2t-2}(q^{t-1}-1)K_{GL(t-2,q)}(\psi;a),
\end{split}
\end{equation}
where we understand that $K_{GL(0,q)}(\psi;a)=1$.

\begin{theorem}\label{2}
$($\cite{L1}$)$ For the canonical additive character $\lambda$ of
$\mathbb{F}_q$, and $a\in\mathbb{F}_q^*$,
\begin{equation}\label{s7}
K_2(\lambda;a)=K(\lambda;a)^2-q.
\end{equation}
\end{theorem}

Our paper will be based on the $t=2$ case of the identity in
(\ref{s6}).

\begin{proposition}\label{3}
For the canonical additive character $\lambda$ of $\mathbb{F}_q$, we
have:
\begin{equation}\label{s8}
K_{GL(2,q)}(\lambda;a)=qK(\lambda;a)^2+q^2(q-1)=qK_2(\lambda;a)+q^3.
\end{equation}
\end{proposition}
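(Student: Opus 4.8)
The proof is a direct specialization of the recursion (\ref{s6}) to the case $t=2$, combined with Theorem~\ref{2}. I would proceed as follows. First, I set $t=2$ in (\ref{s6}), with $\psi=\lambda$ the canonical additive character. Using the conventions $K_{GL(1,q)}(\lambda;a)=K(\lambda;a)$ and $K_{GL(0,q)}(\lambda;a)=1$, the recursion becomes
\begin{equation*}
K_{GL(2,q)}(\lambda;a)=q\,K(\lambda;a)\,K(\lambda;a)+q^{2}(q-1)\cdot 1=q\,K(\lambda;a)^{2}+q^{2}(q-1),
\end{equation*}
which is the first expression in (\ref{s8}).

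Next, I would convert this into the stated form in terms of the $2$-dimensional Kloosterman sum $K_2(\lambda;a)$. By Theorem~\ref{2} (the identity (\ref{s7})), we have $K(\lambda;a)^{2}=K_2(\lambda;a)+q$. Substituting this into the expression just obtained gives
\begin{equation*}
K_{GL(2,q)}(\lambda;a)=q\bigl(K_2(\lambda;a)+q\bigr)+q^{2}(q-1)=q\,K_2(\lambda;a)+q^{2}+q^{3}-q^{2}=q\,K_2(\lambda;a)+q^{3},
\end{equation*}
which is the second expression in (\ref{s8}). This completes the proof.

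There is essentially no obstacle here: the statement is a two-line algebraic consequence of results already established earlier in the excerpt, namely the recursion (\ref{s6}) from \cite{DS3} and the identity (\ref{s7}) from \cite{L1}. The only points requiring a modicum of care are remembering the boundary conventions $K_{GL(0,q)}(\lambda;a)=1$ and $K_{GL(1,q)}(\lambda;a)=K(\lambda;a)$ when specializing (\ref{s6}), and keeping track of the $q$-powers when collecting terms. Everything else is bookkeeping.
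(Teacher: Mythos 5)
Your proposal is correct and matches the paper's own (implicit) derivation: the paper obtains the first equality by setting $t=2$ in (\ref{s6}) with the conventions $K_{GL(1,q)}=K$ and $K_{GL(0,q)}=1$ (as also used in the proof of Lemma \ref{7}), and the second equality follows from (\ref{s7}) exactly as you compute. No issues.
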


\begin{proposition}\label{4}$($\cite{DS2}$)$
For $n=2^s~(s\in\mathbb{Z}_{\geq0})$, $\lambda$ the canonical
additive character of $~\mathbb{F}_q$, and $~a\in\mathbb{F}_q^*$,
\begin{equation}\label{s9}
K(\lambda;a^n)=K(\lambda;a).
\end{equation}
\end{proposition}

\begin{remark}\label{5}
In fact, (\ref{s9}) holds more generally for multi-dimensional
Kloosterman sums. For $n=2^s~(s\in\mathbb{Z}_{\geq0})$, $\lambda$
the canonical additive character of $\mathbb{F}_q$,
$a\in\mathbb{F}_q^*$, and any positive integer $m$,
\begin{equation}\label{s10}
K_m(\lambda;a^n)=K_m(\lambda;a).
\end{equation}
\end{remark}

\noindent The order of the general linear group $GL(n,q)$ is given
by
\begin{equation}\label{s11}
g_n=\prod_{j=0}^{n-1}(q^n-q^j)=q^{\binom{n}{2}}\prod_{j=1}^{n}(q^j-1).
\end{equation}

\section{Construction of codes}

Let
\begin{equation}\label{s12}
N=|GL(2,q)|=q(q-1)(q^2-1).
\end{equation}

Here we will construct a binary linear code $C$ of length $N$
connected with the Kloosterman sum for $GL(2,q)$. Let
$g_1$,$\cdots$,$g_N$ be a fixed ordering of the elements in
$GL(2,q)$, and let
\begin{equation*}
v=( Tr{g_1}+Tr{g_1}^{-1},\;\; \cdots,\;\;Tr{g_N}+Tr{g_N}^{-1}) \in
\mathbb{F}_{q}^{N}.
\end{equation*}
The binary linear code $C=C(GL(2,q))$is defined as
\begin{equation}\label{s13}
C=\{u\in \mathbb{F}_2^N~|~u\cdot v=0\}.
\end{equation}
The following Delsarte's theorem is well-known.

\begin{theorem}\label{6} $($\cite{FN}$)$
Let $B$ be a linear code over $\mathbb{F}_q$. Then
\begin{equation*}
(B|_{\mathbb{F}_2})^\bot=tr(B^\bot).
\end{equation*}
\end{theorem}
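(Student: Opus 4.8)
The plan is to unify the two different orthogonality relations appearing in the statement --- the $\mathbb{F}_q$-duality defining $B^\bot$ and the $\mathbb{F}_2$-duality defining $(B|_{\mathbb{F}_2})^\bot$ --- by means of a single nondegenerate $\mathbb{F}_2$-bilinear form on the ambient space. Regard $\mathbb{F}_q^n$ as a vector space over $\mathbb{F}_2$ (of dimension $rn$, with $q=2^r$) and introduce the form $[x,y]=tr\big(\sum_i x_i y_i\big)=\sum_i tr(x_i y_i)$. Since the trace $\mathbb{F}_q\to\mathbb{F}_2$ is surjective, the associated $\mathbb{F}_2$-bilinear pairing $(x,y)\mapsto tr(xy)$ on $\mathbb{F}_q$ is nondegenerate, and hence so is $[\cdot,\cdot]$ on $\mathbb{F}_q^n$. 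Throughout, $\iota\colon\mathbb{F}_2^n\hookrightarrow\mathbb{F}_q^n$ denotes the inclusion and $\pi=tr\colon\mathbb{F}_q^n\to\mathbb{F}_2^n$ the coordinatewise trace, so that $B|_{\mathbb{F}_2}=B\cap\mathbb{F}_2^n=\iota^{-1}(B)$ and $tr(B^\bot)=\pi(B^\bot)$.

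First I would record the crucial compatibility lemma: for any $\mathbb{F}_q$-subspace $A\subseteq\mathbb{F}_q^n$, the $\mathbb{F}_2$-orthogonal complement $A^{\bullet}$ of $A$ under $[\cdot,\cdot]$ coincides with its $\mathbb{F}_q$-orthogonal complement $A^\bot$ under the standard inner product $\sum_i x_i y_i$. Indeed, $x\in A^{\bullet}$ iff $tr\big(\sum_i x_i a_i\big)=0$ for all $a\in A$; using that $A$ is an $\mathbb{F}_q$-space, we may replace $a$ by $\alpha a$ for every $\alpha\in\mathbb{F}_q$, obtaining $tr\big(\alpha\sum_i x_i a_i\big)=0$ for all $\alpha$, whence nondegeneracy of the trace forces $\sum_i x_i a_i=0$, i.e. $x\in A^\bot$; the converse is immediate. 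Applying this to the $\mathbb{F}_q$-subspace $A=B^\bot$ and invoking the $\mathbb{F}_q$-double-dual identity $(B^\bot)^\bot=B$, I conclude that the $[\cdot,\cdot]$-complement of $B^\bot$ equals $B$.

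Next I would exploit that $\pi$ and $\iota$ are mutually adjoint for these forms: for $x\in\mathbb{F}_q^n$ and $w\in\mathbb{F}_2^n$ one checks $\langle\pi(x),w\rangle=\sum_i w_i\,tr(x_i)=\sum_i tr(x_i w_i)=[x,\iota(w)]$, where $\langle\cdot,\cdot\rangle$ is the standard $\mathbb{F}_2$-form on $\mathbb{F}_2^n$ and the middle equality uses $w_i\in\mathbb{F}_2$ together with $\mathbb{F}_2$-linearity of the trace. The elementary adjunction identity (valid for any adjoint pair) then gives, for every subspace $U\subseteq\mathbb{F}_q^n$, that $\pi(U)^\bot=\iota^{-1}(U^{\bullet})$, where $\pi(U)^\bot$ is the $\mathbb{F}_2$-dual in $\mathbb{F}_2^n$: indeed $w\in\pi(U)^\bot$ iff $[u,\iota(w)]=0$ for all $u\in U$ iff $\iota(w)\in U^{\bullet}$. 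Taking $U=B^\bot$ and inserting $U^{\bullet}=B$ from the previous step yields $(tr(B^\bot))^\bot=\iota^{-1}(B)=B|_{\mathbb{F}_2}$. Dualizing once more over $\mathbb{F}_2$ (the double dual in $\mathbb{F}_2^n$ being the identity) produces $tr(B^\bot)=(B|_{\mathbb{F}_2})^\bot$, as claimed.

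The routine inclusion $tr(B^\bot)\subseteq(B|_{\mathbb{F}_2})^\bot$ can also be seen directly --- if $c\in B^\bot$ and $u\in B\cap\mathbb{F}_2^n$, then $\sum_i u_i\,tr(c_i)=tr\big(\sum_i c_i u_i\big)=tr(0)=0$ --- and this serves as a useful consistency check on the sign-free setup. The main obstacle is conceptual rather than computational: one must carefully juggle three distinct orthogonality relations (two over $\mathbb{F}_2$, one over $\mathbb{F}_q$) and verify that the compatibility lemma and the adjointness of $\pi$ and $\iota$ interlock correctly. The key structural point to keep in view is that it is precisely the $\mathbb{F}_q$-linearity of $B$ (hence of $B^\bot$) that makes the $\mathbb{F}_q$- and $\mathbb{F}_2$-complements agree and lets the double-dual collapse go through; without it the equality of the two complements, and therefore the theorem, would fail.
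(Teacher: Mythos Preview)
Your argument is correct. The compatibility lemma (that the $\mathbb{F}_2$-orthogonal and the $\mathbb{F}_q$-orthogonal of an $\mathbb{F}_q$-subspace coincide, thanks to nondegeneracy of the trace form) and the adjointness of the inclusion $\iota$ and the coordinatewise trace $\pi$ are exactly the right structural ingredients, and you combine them cleanly to obtain $(tr(B^\bot))^\bot=B|_{\mathbb{F}_2}$ and then dualize.

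As for comparison with the paper: there is nothing to compare against. The paper does not prove this statement at all; it is quoted as Delsarte's theorem with a reference to MacWilliams--Sloane and used as a black box to identify $C^\bot$. So you have supplied a self-contained proof where the paper relies on the literature. Your approach is the standard one and is essentially what one finds in the cited source; the only stylistic remark is that the closing paragraph about the ``routine inclusion'' and the ``main obstacle'' is commentary rather than proof and could be trimmed without loss.
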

\noindent In view of this theorem, the dual $C^\bot$ of $C$ is given
by
\begin{equation}\label{s14}
 C^\bot=\{c(a)=(tr(a(Tr{g_1}+Tr{g_1}^{-1})),\cdots,tr(a(Tr{g_N}+
 Tr{g_N}^{-1})))~|~a\in\mathbb{F}_q\}.
\end{equation}

The following estimate is very coarse but will serve for our
purpose.

\begin{lemma}\label{7}
For any $a\in\mathbb{F}_q^*$, and $\psi$ any nontrivial additive
character of $\mathbb{F}_q$,
\begin{equation}\label{s15}
|K_{GL(n,q)}(\psi;a)|<|GL(n,q)|,~\text{for $n\geq2$ and $q\geq4$},
\end{equation}
and
\begin{equation*}
|K_{GL(1,q)}(\psi;a)|<|GL(1,q)|,~\text{for $q\geq8$}.
\end{equation*}
\end{lemma}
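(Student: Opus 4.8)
The plan is to bound $|K_{GL(n,q)}(\psi;a)|$ by the cardinality $|GL(n,q)|$ using the trivial estimate $|\psi(x)| = 1$ for every $x$, refined just enough to get a \emph{strict} inequality. Since $K_{GL(n,q)}(\psi;a) = \sum_{g \in GL(n,q)} \psi(\operatorname{Tr} g + a \operatorname{Tr} g^{-1})$ is a sum of $|GL(n,q)|$ complex numbers of modulus $1$, the triangle inequality immediately gives $|K_{GL(n,q)}(\psi;a)| \leq |GL(n,q)|$, with equality only if all the summands are equal (i.e.\ all equal to a common root of unity). So the whole content is to rule out that equality case, which one can do crudely by exhibiting two group elements $g, g'$ for which $\psi(\operatorname{Tr} g + a\operatorname{Tr} g^{-1}) \neq \psi(\operatorname{Tr} g' + a\operatorname{Tr} g'^{-1})$; then at least two summands differ, so the triangle inequality is strict, giving $|K_{GL(n,q)}(\psi;a)| < |GL(n,q)|$.

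First I would handle the case $n \geq 2$. Write $\psi = \lambda_b$ for the additive character $x \mapsto \lambda(bx)$ with $b \in \mathbb{F}_q^*$; it suffices to produce $g$ with $\operatorname{Tr} g + a \operatorname{Tr} g^{-1} = 0$ and $g'$ with $\operatorname{Tr} g' + a \operatorname{Tr} g'^{-1} \neq 0$ such that $\psi$ separates these two values — or more simply, two elements on which the argument $\operatorname{Tr} g + a \operatorname{Tr} g^{-1}$ takes two distinct values $s_1, s_2$ with $\psi(s_1) \neq \psi(s_2)$. For $n = 2$ one can take diagonal matrices $g = \operatorname{diag}(x, x^{-1})$ with $x$ ranging over $\mathbb{F}_q^*$: then $\operatorname{Tr} g + a \operatorname{Tr} g^{-1} = (x + x^{-1})(1 + \text{something})$ — more carefully, $\operatorname{Tr} g = x + x^{-1}$ and $\operatorname{Tr} g^{-1} = x^{-1} + x$, so the argument is $(1+a)(x + x^{-1})$; if $a \neq 1$ this takes at least two distinct values as $x$ varies over $\mathbb{F}_q^*$ (the map $x \mapsto x + x^{-1}$ is non-constant for $q \geq 4$), and for $a = 1$ one instead uses non-diagonal elements, e.g.\ upper-triangular matrices $\begin{pmatrix} 1 & u \\ 0 & 1 \end{pmatrix}$ whose inverse is $\begin{pmatrix} 1 & u \\ 0 & 1 \end{pmatrix}$ in characteristic $2$, giving trace $0$ always — so that is unhelpful — better to use $\operatorname{diag}(x,1)$ with argument $(x + 1) + a(x^{-1} + 1)$, which is a non-constant function of $x$ for $q \geq 4$ since it is a nonconstant rational function of degree $\leq 2$. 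Embedding such $2 \times 2$ blocks (padded by the identity) into $GL(n,q)$ for general $n \geq 2$, and noting that the padding adds a constant $n-2$ to both traces, shows the argument is still non-constant on $GL(n,q)$, and then picking the value $x$ so that $\psi$ distinguishes two of the attained values (possible since $\psi$ is nontrivial and the attained values span more than a coset of $\ker\psi$ once $q \geq 4$) completes this case. For $n = 1$ and $q \geq 8$ the sum is the ordinary Kloosterman sum $K(\psi; a)$, and the Weil bound \eqref{s1} gives $|K(\psi;a)| \leq 2\sqrt{q} < q - 1 = |GL(1,q)|$ precisely when $2\sqrt{q} < q - 1$, i.e.\ $q > 5 + 2\sqrt{6} \approx 9.9$, so $q \geq 16$; for $q = 8$ one checks directly (or notes $2\sqrt{8} = 4\sqrt 2 \approx 5.66 < 7$), which already works.

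The main obstacle — a mild one — is getting the \emph{strict} inequality uniformly, i.e.\ making sure that for every choice of $a$ and every nontrivial $\psi$ the argument function $g \mapsto \operatorname{Tr} g + a \operatorname{Tr} g^{-1}$ composed with $\psi$ is genuinely non-constant on $GL(n,q)$ for the stated ranges of $q$. This requires being a little careful in characteristic $2$ (where $-1 = 1$ and many naive families of matrices collapse), and in the edge cases $q = 4$ for $n \geq 2$ and $q = 8$ for $n = 1$. I expect the cleanest route is: (i) reduce to the $2\times 2$ case by the block-embedding above; (ii) for $2 \times 2$, consider the two-parameter family $\operatorname{diag}(x, y)$ with $xy \neq 0$, whose argument is $x + y + a(x^{-1} + y^{-1})$, and argue that this cannot be constant on $(\mathbb{F}_q^*)^2$ when $q \geq 4$ — if it were, differencing in $x$ with $y$ fixed would force $x + ax^{-1}$ constant on $\mathbb{F}_q^*$, impossible for $q \geq 4$ since it is a nonconstant rational map taking at most $2$ preimages per value; (iii) conclude that $\psi$ of the argument is non-constant too, using that $\psi$ is injective on no smaller-than-needed subset — more precisely, if $\psi(\operatorname{Tr} g + a\operatorname{Tr} g^{-1})$ were constant then $\operatorname{Tr} g + a\operatorname{Tr} g^{-1}$ would lie in a single coset of the index-$2$ subgroup $\ker(\psi)$, hence take at most... — actually the cleanest finish is just: the argument takes at least two values $s_1 \ne s_2$; choose $b$ (equivalently $\psi$) is fixed, and among the (at least $q-1 \geq 3$) values attained by $x + ax^{-1}$ not all can lie in one fibre of $\psi$, so $\psi$ is non-constant on them. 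Thus two summands in $K_{GL(n,q)}(\psi;a)$ differ and the triangle inequality is strict, giving \eqref{s15}. The $n=1$ case is then immediate from \eqref{s1} as above.
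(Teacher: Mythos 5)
Your overall strategy --- bound the sum trivially by the triangle inequality and then get strictness by showing that $g\mapsto\psi(\operatorname{Tr}g+a\operatorname{Tr}g^{-1})$ is non-constant on $GL(n,q)$ --- is a legitimate idea and genuinely different from the paper, which instead combines the recursion (\ref{s6}) with the Weil bound (\ref{s1}) to get explicit numerical estimates ($|K_{GL(2,q)}(\psi;a)|\le q^3+3q^2<q(q-1)(q^2-1)$ for $q\ge4$, an explicit bound for $n=3$, then induction on $n$). However, your execution has a genuine gap exactly at the strictness step (iii). Non-constancy of the $\mathbb{F}_q$-valued map $s(g)=\operatorname{Tr}g+a\operatorname{Tr}g^{-1}$ does not give non-constancy of $\psi\circ s$: writing $\psi=\lambda(b\,\cdot)$, you must rule out that all values of $s$ attained on your chosen family lie in a single coset of the index-$2$ subgroup $\{x:tr(bx)=0\}$. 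Your count of attained values is wrong: on the family $\mathrm{diag}(x,1)$ the relevant function $x\mapsto x+ax^{-1}$ is two-to-one away from $x=\sqrt a$, so it attains exactly $q/2$ values (not ``at least $q-1$''), and $q/2$ is precisely the size of the kernel hyperplane, so cardinality alone cannot prevent containment. Worse, at $q=4$ the containment actually happens: the value set of $x+ax^{-1}$ on $\mathbb{F}_4^*$ is $\{0,\sqrt a\}$, and choosing $b=(\sqrt a)^{-1}$ gives $tr(b\sqrt a)=tr(1)=0$, so $\lambda(b\,\cdot)$ is constant on the whole (shifted) value set of $s$ along your family $\mathrm{diag}(x,1)$. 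Thus for that $\psi$ and $a$ your exhibited matrices do not produce two unequal summands, and the argument does not close; the two-parameter family in (ii) only reproves non-constancy of $s$ over $\mathbb{F}_q$, which is not the needed statement.

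What is actually true (and would rescue your plan) is that $s$ is surjective onto $\mathbb{F}_q$ for $q\ge4$, equivalently $n(\beta)>0$ for all $\beta$, which is exactly the content of Proposition \ref{12} together with (\ref{s1}) (cf.\ Remark \ref{10}(b)); but that rests on the character-sum evaluation (\ref{s22}) and the Weil bound, i.e.\ on the same quantitative inputs the paper uses, at which point the paper's route via (\ref{s6}) and (\ref{s1}) is shorter and handles all $n\ge2$ uniformly by induction. Your $n=1$ case is fine and coincides with the paper's (the algebra should give $2\sqrt q<q-1$ for $q>3+2\sqrt2$, so all $q\ge8$, no separate check needed), but for $n\ge2$ the proposal as written does not prove (\ref{s15}).
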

\begin{proof}
For $n=1$, this is trivial, since $2\sqrt{q}<q-1$, for $q\geq8$. For
$n=2$, from (\ref{s6})
\begin{equation}\label{s16}
K_{GL(2,q)}(\psi;a)=qK(\psi;a)^2+q^2(q-1),
\end{equation}
and hence from (\ref{s1}) and (\ref{s16}), for $q\geq4$,
\begin{equation}\label{s17}
|K_{GL(2,q)}(\psi;a)|\leq q^3+3q^2<q(q-1)(q^2-1)=|GL(2,q)|.
\end{equation}
For $n=3$, from (\ref{s6}),
\begin{equation}\label{s18}
K_{GL(3,q)}(\psi;a)=q^2K_{GL(2,q)}(\psi;a)K(\psi;a)+q^4(q^2-1)K(\psi;a),
\end{equation}
and hence from (\ref{s1}), (\ref{s17}), and (\ref{18}), for
$q\geq4$,
\begin{equation*}
|K_{GL(3,q)}(\psi;a)|<2q^{\frac{7}{2}}(q^2-1)(2q-1)<q^3(q-1)(q^2-1)(q^3-1)=|GL(3,q)|.
\end{equation*}
Assume now that $n\geq4$ and that (\ref{s15}) holds for all integers
less than $n$ and greater than and equal to 2, for $q\geq4$. Then,
from (\ref{s1}), (\ref{s6}), and (\ref{s11}), and for $q\geq4$,
\begin{equation*}
|K_{GL(n,q)}(\psi;a)|<q^{\binom{n}{2}}(q+2\sqrt{q})\prod_{j=1}^{n-1}(q^j-1)<
q^{\binom{n}{2}}\prod_{j=1}^{n}(q^j-1)<|GL(n,q)|.
\end{equation*}
\end{proof}

\begin{remark}\label{8}
It was shown in $[3, Theorem~2]$ that, for any nontrivial additive
character $\psi$ of $\mathbb{F}_q$ and $a\in\mathbb{F}_q^*$,
\begin{equation*}
\begin{split}
K_{GL(n,q)}(\psi;a^2)&=\sum_{g\in GL(n,q)}\psi(a(Tr{g}+Tr{g^{-1}}))\\
&=(-1)^nq^{\binom{n}{2}}\sum_{j=0}^{n}\begin{bmatrix}
                                                              n \\
                                                              j \\
                                                            \end{bmatrix}_q
                                                            \omega^j\bar{\omega}^{n-j},
\end{split}
\end{equation*}
where $\omega$, $\bar{\omega}$ are complex numbers, depending on
$\psi$ and $a$, with $|\omega|$=$|\bar{\omega}|$=$\sqrt{q}$. Thus
\begin{equation*}
|K_{GL(n,q)}(\psi;a^2)|\leq
q^{\frac{1}{2}n^2}\sum_{j=0}^{n}\begin{bmatrix}
                                                              n \\
                                                              j \\
                                                            \end{bmatrix}_q,
\end{equation*}
and, in particular, we get
\begin{equation*}
|K_{GL(2,q)}(\psi;a^2)|\leq q^{2}\sum_{j=0}^{2}\begin{bmatrix}
                                                              2 \\
                                                              j \\
                                                            \end{bmatrix}_q
                                                            =q^2(q+3).
\end{equation*}
\end{remark}

\begin{proposition}\label{9}
The map $\mathbb{F}_q \rightarrow C^\bot~(a\mapsto c(a))$ is an
$\mathbb{F}_2$-linear isomorphism for $q\geq4$.
\end{proposition}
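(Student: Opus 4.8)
The plan is to prove that the $\mathbb{F}_2$-linear map $a\mapsto c(a)$ from $\mathbb{F}_q$ to $C^\bot$ is an isomorphism by showing it is injective; since $|\mathbb{F}_q|=q=2^r$ and $C^\bot$ sits inside $\mathbb{F}_2^N$, injectivity together with $\mathbb{F}_2$-linearity forces the image to have $2^r$ elements, and surjectivity onto $C^\bot$ is then automatic from the description of $C^\bot$ in (\ref{s14}). Linearity of $a\mapsto c(a)$ is immediate from the additivity of the trace function $tr:\mathbb{F}_q\to\mathbb{F}_2$, so the entire content is injectivity, i.e. that $c(a)=0$ implies $a=0$.

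First I would suppose $c(a)=\mathbf{0}$ for some $a\in\mathbb{F}_q$; this means $tr\big(a(Tr\,g_i+Tr\,g_i^{-1})\big)=0$ for every $g_i\in GL(2,q)$, i.e. $\psi\big(Tr\,g+Tr\,g^{-1}\big)=1$ for all $g\in GL(2,q)$, where $\psi(x)=\lambda(ax)$. Summing over $g\in GL(2,q)$ gives $K_{GL(2,q)}(\psi;1)=|GL(2,q)|=N$. If $a\neq 0$, then $\psi$ is a nontrivial additive character, and Lemma \ref{7} applies (we are in the regime $q\geq 4$, since $r\geq 2$): it yields $|K_{GL(2,q)}(\psi;1)|<|GL(2,q)|=N$, a contradiction. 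Hence $a=0$, proving injectivity.

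Then I would conclude: the map has trivial kernel, so its image has $q$ elements, and since by (\ref{s14}) the image is exactly $C^\bot$, the map is an $\mathbb{F}_2$-linear isomorphism $\mathbb{F}_q\xrightarrow{\sim}C^\bot$ for $q\geq 4$, as claimed.

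There is really no hard part here — the only subtlety is making sure the coarse bound of Lemma \ref{7} is genuinely available, which requires $q\geq 4$, exactly the hypothesis in the statement (and consistent with $r\geq 2$ throughout Theorem \ref{1}). One could alternatively argue injectivity more directly by noting that the set $\{Tr\,g+Tr\,g^{-1}: g\in GL(2,q)\}$ spans $\mathbb{F}_q$ over $\mathbb{F}_2$ — for instance using diagonal matrices $\mathrm{diag}(x,1)$ with $x\in\mathbb{F}_q^*$, whose contribution is $x+x^{-1}$ — but the Lemma \ref{7} route is cleaner and is the reason that coarse estimate was recorded.
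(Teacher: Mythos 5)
Your proposal is correct and follows essentially the same route as the paper: injectivity via summing over $GL(2,q)$, recognizing the sum as a Kloosterman sum for $GL(2,q)$, and contradicting the coarse bound of Lemma \ref{7} when $a\neq 0$. The only cosmetic difference is that you apply Lemma \ref{7} directly to the nontrivial character $\psi(x)=\lambda(ax)$ at argument $1$, whereas the paper first substitutes $g\mapsto a^{-1}g$ to rewrite the sum as $K_{GL(2,q)}(\lambda;a^2)$ — both are covered by the lemma as stated.
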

\begin{proof}
The map is clearly $\mathbb{F}_2$-linear and surjective. Let $a$ be
in the kernel of the map. Then $tr(a(Tr{g}+Tr{g^{-1}}))=0$, for all
$g\in GL(2,q)$. Suppose that $a\neq 0$. Then, on the one hand,
\begin{equation}\label{s19}
\begin{split}
|GL(2,q)|&=\sum_{g\in GL(2,q)}(-1)^{tr(a(Tr{g}+Tr{g^{-1}}))}\\
&=\sum_{g\in GL(2,q)}\lambda(a(Tr{g}+Tr{g^{-1}}))\\
&=\sum_{g\in GL(2,q)}\lambda(Tr{g}+a^2Tr{g^{-1}})~(g\rightarrow a^{-1}g)\\
&=K_{GL(2,q)}(\lambda;a^2).
\end{split}
\end{equation}
As $q\geq4$, (\ref{s19}) is on the other hand strictly less than
$|GL(2,q)|$ by Lemma \ref{7}. This is a contradiction. So we must
have $a=0$.
\end{proof}

\begin{remark}\label{10}
$($a$)$ If $q=2$, one checks easily that the kernel of the map
$\mathbb{F}_2 \rightarrow C^\bot$ is $\mathbb{F}_2$.

\noindent $($b$)$ The fact that the map in Proposition \ref{9} is
injective follows also from (\ref{s1}) and (\ref{s22}), since they
imply that $n(\beta)>0$, for all $\beta$, provided that $q\geq4$.
\end{remark}

\begin{proposition}\label{11} $($\cite{DS2}$)$
Let $\lambda$ be the canonical additive character of $\mathbb{F}_q$,
$m\in\mathbb{Z}_{>0}$, $\beta\in\mathbb{F}_q$. Then
\begin{equation}\label{s20}
\begin{split}
&\sum_{a\in\mathbb{F}_q^*}\lambda(-a\beta)K_m(\lambda;a)\\
&~=
\begin{cases}
qK_{m-1}(\lambda;\beta^{-1})+(-1)^{m+1}, & \text{if $\beta\neq 0$},\\
(-1)^{m+1},& \text{if $\beta=0$.}
\end{cases}
\end{split}
\end{equation}
with the convention $K_0(\lambda;\beta^{-1})=\lambda(\beta^{-1})$.
\end{proposition}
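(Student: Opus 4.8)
The plan is to prove Proposition~\ref{11} by a direct manipulation of the character sum, swapping the order of summation and using the orthogonality relations for additive characters of $\mathbb{F}_q$. The statement to be proved is the evaluation of $\sum_{a\in\mathbb{F}_q^*}\lambda(-a\beta)K_m(\lambda;a)$, so the first step is to substitute the definition
\begin{equation*}
K_m(\lambda;a)=\sum_{\alpha_1,\dots,\alpha_m\in\mathbb{F}_q^*}\lambda(\alpha_1+\cdots+\alpha_m+a\alpha_1^{-1}\cdots\alpha_m^{-1}),
\end{equation*}
and interchange the (finite) sums so that the sum over $a\in\mathbb{F}_q^*$ is on the inside. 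For fixed $\alpha_1,\dots,\alpha_m$, the inner sum becomes $\sum_{a\in\mathbb{F}_q^*}\lambda\bigl(a(\alpha_1^{-1}\cdots\alpha_m^{-1}-\beta)\bigr)$, which equals $q-1$ if $\beta=\alpha_1^{-1}\cdots\alpha_m^{-1}$ and $-1$ otherwise, by the orthogonality relation $\sum_{a\in\mathbb{F}_q}\lambda(ac)=q\,\delta_{c,0}$ restricted to $a\neq 0$.

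Next I would split the resulting expression into the "main term'' (coming from the $q-1$ contribution on the locus $\alpha_1^{-1}\cdots\alpha_m^{-1}=\beta$) and the "error term'' (the $-1$ contribution over all $(\alpha_1,\dots,\alpha_m)\in(\mathbb{F}_q^*)^m$). The error term is simply $-\sum_{\alpha_1,\dots,\alpha_m\in\mathbb{F}_q^*}\lambda(\alpha_1+\cdots+\alpha_m)$, and since each $\sum_{\alpha\in\mathbb{F}_q^*}\lambda(\alpha)=-1$, this equals $-(-1)^m=(-1)^{m+1}$, independently of $\beta$. This accounts for the $(-1)^{m+1}$ term appearing in both cases of the statement.

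For the main term, assume first $\beta\neq 0$. The constraint $\alpha_1^{-1}\cdots\alpha_m^{-1}=\beta$ is equivalent to $\alpha_1\cdots\alpha_m=\beta^{-1}$, so the main term is $q\cdot\sum_{\alpha_1\cdots\alpha_m=\beta^{-1}}\lambda(\alpha_1+\cdots+\alpha_m)$ (writing $q-1$ contribution plus adding back a $+1$ that must be absorbed; more cleanly, combine the $q-1$ on the locus with the $-1$ already counted and then note the locus sum times $q$). To recognize this, reparametrize: on the locus $\alpha_1\cdots\alpha_m=\beta^{-1}$, set $a=\alpha_1^{-1}\cdots\alpha_m^{-1}=\beta$ fixed, and observe that $\sum_{\alpha_1\cdots\alpha_m=c}\lambda(\alpha_1+\cdots+\alpha_m)$ is exactly the defining sum for $K_{m-1}(\lambda;c)$ after eliminating the last variable $\alpha_m=c\,\alpha_1^{-1}\cdots\alpha_{m-1}^{-1}$; indeed $\alpha_1+\cdots+\alpha_{m-1}+c\alpha_1^{-1}\cdots\alpha_{m-1}^{-1}$ is the argument in $K_{m-1}(\lambda;c)$. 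Hence the main term is $q\,K_{m-1}(\lambda;\beta^{-1})$, and together with the error term we get $qK_{m-1}(\lambda;\beta^{-1})+(-1)^{m+1}$. When $m=1$ this requires the convention $K_0(\lambda;\beta^{-1})=\lambda(\beta^{-1})$, which matches since the locus $\alpha_1=\beta^{-1}$ is a single point contributing $\lambda(\beta^{-1})$. Finally, if $\beta=0$, the locus $\alpha_1^{-1}\cdots\alpha_m^{-1}=0$ is empty, so only the error term survives and the sum is $(-1)^{m+1}$.

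The main obstacle, such as it is, is purely bookkeeping: keeping the $q-1$ versus $q$ and the sign of the error term straight when separating the orthogonality contribution into main and error parts, and correctly handling the degenerate $m=1$ case so that the stated convention $K_0(\lambda;\beta^{-1})=\lambda(\beta^{-1})$ is forced rather than ad hoc. There is no deep difficulty; once the sums are interchanged and orthogonality is applied, the identity falls out, and the only care needed is in the variable elimination $\alpha_m=c\,\alpha_1^{-1}\cdots\alpha_{m-1}^{-1}$ which is a bijection of $(\mathbb{F}_q^*)^{m-1}$ onto the locus.
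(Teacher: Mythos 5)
Your argument is correct and complete: after interchanging the sums, the orthogonality relation gives the inner sum as $q\,\delta_{\alpha_1^{-1}\cdots\alpha_m^{-1},\beta}-1$, the $-1$ part contributes $-(-1)^m=(-1)^{m+1}$ uniformly in $\beta$, the locus is empty for $\beta=0$, and for $\beta\neq 0$ the elimination $\alpha_m=\beta^{-1}\alpha_1^{-1}\cdots\alpha_{m-1}^{-1}$ is a bijection of $(\mathbb{F}_q^*)^{m-1}$ onto the locus, yielding $qK_{m-1}(\lambda;\beta^{-1})$, with the $m=1$ case forcing exactly the stated convention $K_0(\lambda;\beta^{-1})=\lambda(\beta^{-1})$. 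The paper itself gives no proof of Proposition \ref{11}, citing it to \cite{DS2}; your derivation is the standard one this identity rests on, so there is nothing to fault beyond the slightly garbled parenthetical about absorbing the $q-1$ versus $q$, which your final decomposition handles correctly anyway.
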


Let
\begin{equation}\label{s21}
n(\beta)=|\{g\in GL(2,q)|Tr{g}+Tr{g^{-1}}=\beta\}|.
\end{equation}

Then, with $N$ as in (\ref{s12}),
\begin{equation*}
\begin{split}
qn(\beta)&=N+\sum_{\alpha\in\mathbb{F}_q^*}\lambda(-\alpha\beta)\sum_{g\in
GL(2,q)}\lambda(\alpha(Tr{g}+Tr{g^{-1}}))\\
&=N+\sum_{\alpha\in\mathbb{F}_q^*}\lambda(-\alpha\beta)
K_{GL(2,q)}(\lambda;\alpha^2)\\
&=N+\sum_{\alpha\in\mathbb{F}_q^*}\lambda(-\alpha\beta)
(qK_2(\lambda;\alpha^2)+q^3)(cf. (\ref{s8}))\\
&=N+q\sum_{\alpha\in\mathbb{F}_q^*}\lambda(-\alpha\beta)
K_2(\lambda;\alpha^2)+q^3\sum_{\alpha\in\mathbb{F}_q^*}\lambda(-\alpha\beta)\\\
&=N+q\sum_{\alpha\in\mathbb{F}_q^*}\lambda(-\alpha\beta)
K_2(\lambda;\alpha)+q^3\sum_{\alpha\in\mathbb{F}_q^*}\lambda(-\alpha\beta).(cf.
(\ref{s10}))
\end{split}
\end{equation*}

Now, from Proposition \ref{11}, we obtain the following.

\begin{proposition}\label{12}
Let $n(\beta)$ be as in (\ref{s21}). Then we have
\begin{equation}\label{s22}
n(\beta)=
\begin{cases}
q(q^2-2q-1+K(\lambda;\beta^{-1})), & \text{if $\beta\neq 0$},\\
q(2q^2-2q-1),& \text{if $\beta=0$.}
\end{cases}
\end{equation}
\end{proposition}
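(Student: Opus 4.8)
The plan is to compute $n(\beta)$ directly by Fourier inversion over the additive group of $\mathbb{F}_q$, exactly following the chain of equalities already set up in the excerpt just before the statement. That is, I start from the orthogonality relation
\[
qn(\beta)=N+\sum_{\alpha\in\mathbb{F}_q^*}\lambda(-\alpha\beta)K_{GL(2,q)}(\lambda;\alpha^2),
\]
substitute the identity $K_{GL(2,q)}(\lambda;\alpha^2)=qK_2(\lambda;\alpha^2)+q^3$ from Proposition \ref{3}, then use Remark \ref{5} (equation (\ref{s10})) to replace $K_2(\lambda;\alpha^2)$ by $K_2(\lambda;\alpha)$. This leaves
\[
qn(\beta)=N+q\sum_{\alpha\in\mathbb{F}_q^*}\lambda(-\alpha\beta)K_2(\lambda;\alpha)+q^3\sum_{\alpha\in\mathbb{F}_q^*}\lambda(-\alpha\beta),
\]
which is the last displayed line of the excerpt.

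Next I would evaluate the two remaining sums. The second is an elementary character sum: $\sum_{\alpha\in\mathbb{F}_q^*}\lambda(-\alpha\beta)$ equals $q-1$ when $\beta=0$ and equals $-1$ when $\beta\neq0$, by the orthogonality of additive characters. The first sum is handled by Proposition \ref{11} with $m=2$: it equals $qK_1(\lambda;\beta^{-1})+(-1)^{3}=qK(\lambda;\beta^{-1})-1$ when $\beta\neq0$, and equals $(-1)^{3}=-1$ when $\beta=0$. Plugging $N=q(q-1)(q^2-1)=q^4-q^3-q^2+q$ into both cases and dividing by $q$ should give precisely $n(\beta)=q(q^2-2q-1+K(\lambda;\beta^{-1}))$ for $\beta\neq0$ and $n(\beta)=q(2q^2-2q-1)$ for $\beta=0$.

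The bulk of this is routine arithmetic, so there is no serious obstacle; the only point requiring a little care is bookkeeping the constant terms. For $\beta\neq0$: $qn(\beta)=N+q(qK(\lambda;\beta^{-1})-1)+q^3(-1)=q^4-q^3-q^2+q+q^2K(\lambda;\beta^{-1})-q-q^3=q^2K(\lambda;\beta^{-1})+q^4-2q^3-q^2$, and dividing by $q$ yields $q(q^2-2q-1)+qK(\lambda;\beta^{-1})$, as claimed. For $\beta=0$: $qn(0)=N+q(-1)+q^3(q-1)=q^4-q^3-q^2+q-q+q^4-q^3=2q^4-2q^3-q^2$, so $n(0)=q(2q^2-2q-1)$. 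I would present these two verifications compactly and cite (\ref{s10}), Proposition \ref{3}, and Proposition \ref{11} at the appropriate spots; since the main computation was already displayed before the statement, the proof itself can simply invoke Proposition \ref{11} and finish the arithmetic.
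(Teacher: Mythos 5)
Your proposal is correct and matches the paper's own argument: the paper derives exactly the displayed identity $qn(\beta)=N+q\sum_{\alpha\in\mathbb{F}_q^*}\lambda(-\alpha\beta)K_2(\lambda;\alpha)+q^3\sum_{\alpha\in\mathbb{F}_q^*}\lambda(-\alpha\beta)$ via (\ref{s8}) and (\ref{s10}), and then concludes by Proposition \ref{11} (with $m=2$) together with orthogonality of additive characters. Your arithmetic in both cases ($\beta\neq0$ and $\beta=0$) checks out, so this is the same proof with the routine bookkeeping written out explicitly.
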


\section{Power moments of 2-dimensional Kloosterman sums}
In this section, we will be able to find, via Pless power moment
identity, a recursive formula for the power moments of 2-dimensional
Kloosterman sums or equivalently for the even power moments of
Kloosterman sums in terms of the frequencies of weights in
$C=C(GL(2,q))$.

\begin{theorem}\label{15}\rm(Pless power moment identity\rm):
Let $B$ be a $q$-ary $[n,k]$ code, and let $B_{i}$ (resp. $B_{i}
^{\bot})$ denote the number of codewords of weight $i$ in $B$ (resp.
in $B^{\bot})$. Then, for $h=0, 1, 2, \cdots$,

\begin{equation}\label{s23}
\sum_{j=0}^{n}j^{h}B_{j}=\sum_{j=0}^{min\{n,h\}}(-1)^{j}B_{j}^{\bot}
\sum_{t=j}^{h} t! S(h,t)q^{k-t}(q-1)^{t-j}\binom{n-j}{n-t},
\end{equation}
where $S(h,t)$ is the Stirling number of the second kind defined in
(\ref{s5}).
\end{theorem}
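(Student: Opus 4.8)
The final statement to prove is the Pless power moment identity (Theorem~\ref{15}). I outline the plan below.

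\medskip

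\textbf{Plan of proof.} The standard route is via the MacWilliams identity together with generating-function manipulations, but for a self-contained argument I would prefer the direct combinatorial-counting approach. First I would set up the exponential-sum expression for the dual weight enumerator. For each codeword $c=(c_1,\dots,c_n)\in B^\bot\subseteq\mathbb{F}_q^n$, the weight $\mathrm{wt}(c)$ equals $n$ minus the number of zero coordinates, and the number of zero coordinates can be written as $\frac{1}{q}\sum_{i=1}^n\sum_{\chi}\chi(c_i)$, where $\chi$ runs over the additive characters of $\mathbb{F}_q$. The key classical fact I would invoke (or re-derive in a line) is the relation expressing $\sum_{j}j^h B_j$ — the $h$-th moment of the weight distribution of $B$ — in terms of the $B_j^\bot$. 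Concretely, one expands
\begin{equation*}
\sum_{j=0}^n j^h B_j=\sum_{c\in B}\mathrm{wt}(c)^h
\end{equation*}
and writes $\mathrm{wt}(c)=\sum_{i=1}^n \bigl(1-\delta(c_i)\bigr)$, where $\delta(x)=1$ if $x=0$ and $0$ otherwise.

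\medskip

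Next I would handle the combinatorics of raising the sum $\mathrm{wt}(c)=\sum_{i=1}^n\bigl(1-\delta(c_i)\bigr)$ to the $h$-th power. Expanding the multinomial and grouping by the number $t$ of \emph{distinct} coordinate positions appearing, the number of surjections from an $h$-set onto a $t$-set is $t!\,S(h,t)$, which is exactly where the Stirling numbers of the second kind enter. This yields
\begin{equation*}
\mathrm{wt}(c)^h=\sum_{t=0}^h t!\,S(h,t)\sum_{\substack{T\subseteq\{1,\dots,n\}\\ |T|=t}}\ \prod_{i\in T}\bigl(1-\delta(c_i)\bigr).
\end{equation*}
Summing over $c\in B$ and interchanging the order of summation, I must evaluate $\sum_{c\in B}\prod_{i\in T}\bigl(1-\delta(c_i)\bigr)$ for each fixed $T$ of size $t$; by inclusion–exclusion in $\delta$ this becomes $\sum_{j=0}^t(-1)^j\binom{t}{j}$ (times a sum over $j$-subsets $S\subseteq T$) of the number of codewords of $B$ vanishing on $S$. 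The number of codewords of $B$ vanishing on a fixed $j$-set $S$ is $|B\cap \{x: x|_S=0\}|$, and here one uses the shortened/punctured-code count together with the MacWilliams-type duality: the number of $c\in B$ with prescribed zeros on $S$ is governed by $B^\bot$. After carefully tracking the count of pairs $(S, c(a))$ with $S\subseteq\mathrm{Supp}(c(a))^c$ — equivalently counting, for each dual codeword of weight $j$, the $\binom{n-j}{n-t}$ ways to choose the remaining positions of $T$ among the $n-j$ zero positions — one collects the factor $q^{k-t}(q-1)^{t-j}\binom{n-j}{n-t}$ and arrives at the right-hand side of (\ref{s23}).

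\medskip

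\textbf{Main obstacle.} The delicate point is the bookkeeping that converts the double expansion (multinomial in $h$, then inclusion–exclusion in the $\delta$'s) into the clean closed form with $(-1)^j B_j^\bot$ and the binomial $\binom{n-j}{n-t}$: one must correctly identify that, for a dual codeword of weight $j$, a $j$-subset $S$ contributes only when $S$ lies in its zero-set, and then count the number of size-$t$ supersets $T\supseteq S$ with the extra $t-j$ positions among the $n-j$ zeros, producing the $(q-1)^{t-j}$ from the nonzero symbol choices on those positions in the primal code. Since this identity is entirely classical (Pless), I would in the actual writeup most economically derive it from the MacWilliams identity $W_{B^\bot}(x,y)=\frac1{|B|}W_B(x+(q-1)y,\,x-y)$ by applying the differential operator $\bigl(y\frac{\partial}{\partial y}\bigr)^h$ and setting $x=y=1$, which reproduces the left side as $\sum_j j^h B_j$ and, after expanding $W_B$ in the shifted variables and using $\bigl(y\partial_y\bigr)^h$ acting on monomials (again yielding $t!\,S(h,t)$), the right side in the stated form. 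I would cite \cite{FN} (or a standard coding-theory reference) for the MacWilliams identity and present the operator computation; the only real work is the elementary identity $\bigl(y\tfrac{d}{dy}\bigr)^h y^m=\sum_{t}t!\,S(h,t)\binom{m}{t}y^m$, which closes the argument.
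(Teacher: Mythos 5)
The paper does not prove this statement at all: the Pless power moment identity is quoted as a known classical result (it can be found in MacWilliams--Sloane \cite{FN}), and the paper's only ``proof-level'' content around it is its application to $C(GL(2,q))^{\bot}$ in Section 4. So there is nothing in the paper to compare against, and what you supply is an independent proof sketch. Your outline is essentially correct and standard, by either of the two routes you describe. In the combinatorial route, the expansion $\mathrm{wt}(c)^h=\sum_{t}t!\,S(h,t)\sum_{|T|=t}\prod_{i\in T}(1-\delta(c_i))$ is valid (using that the indicators are idempotent, with $t!\,S(h,t)$ counting surjections), and the duality step you gesture at is the identity $|\{c\in B: c|_S=0\}|=q^{k-|S|}\,|\{a\in B^{\bot}:\mathrm{supp}(a)\subseteq S\}|$, after which summing $(-1)^{|S|}q^{k-|S|}$ over $\mathrm{supp}(a)\subseteq S\subseteq T$ gives $(-1)^{j}q^{k-t}(q-1)^{t-j}$ and the count of admissible $T$ gives $\binom{n-j}{n-t}$. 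Note that in this derivation the factor $(q-1)^{t-j}$ comes from that alternating binomial sum, not literally from ``nonzero symbol choices in the primal code'' as you say; that phrase papers over the one genuinely delicate computation. Likewise, in the MacWilliams-operator route the real work is not only $\bigl(y\frac{d}{dy}\bigr)^h y^m=m^h y^m$ but the Leibniz expansion of $\partial_y^t\bigl[(1+(q-1)y)^{n-j}(1-y)^{j}\bigr]$ at $y=1$, where exactly $j$ derivatives must fall on $(1-y)^j$, producing $(-1)^j j!$ and hence the $(-1)^j$, $(q-1)^{t-j}$ and $\binom{n-j}{n-t}$ factors. Both routes close correctly once these details are written out; as it stands your text is a correct plan rather than a complete proof, and for the purposes of this paper it would be equally acceptable to cite the identity, as the authors implicitly do.
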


From now on, we will assume that $q\geq4(i.e.,r\geq2)$, so that
every codeword in $C(GL(2,q))^\bot$ can be written as $c(a)$, for a
unique $a\in\mathbb{F}_q$(cf. Proposition \ref{9}). This also allows
one to use Theorem \ref{17}.

\begin{lemma}\label{14}
Let $c(a)=(tr(a(Tr{g_1}+Tr{g_1}^{-1})),\cdots,
 tr(a(Tr{g_N}+Tr{g_N}^{-1})))\in C(GL(2,q))^\bot$, for
 $a\in\mathbb{F}_q^*$. Then the Hamming weight $w(c(a))$ can be
 expressed as follows:
\begin{equation}\label{s24}
w(c(a))=\frac{1}{2}q(q^3-2q^2+1-K(\lambda;a)^2)
\end{equation}
\begin{equation}\label{s25}
\quad\quad\quad\quad\quad=\frac{1}{2}q(q^3-2q^2-q+1-K_2(\lambda;a)).
\end{equation}
\end{lemma}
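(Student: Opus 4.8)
\textbf{Proof proposal for Lemma \ref{14}.}

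The plan is to compute the Hamming weight $w(c(a))$ by counting the number of coordinates that vanish, i.e., by evaluating $\sum_{i=1}^{N}\tfrac{1}{2}\bigl(1-(-1)^{tr(a(Tr{g_i}+Tr{g_i^{-1}}))}\bigr)$, which is the standard way to turn a binary weight into a character sum. Writing this out, $w(c(a)) = \tfrac{1}{2}N - \tfrac{1}{2}\sum_{g\in GL(2,q)}\lambda\bigl(a(Tr{g}+Tr{g^{-1}})\bigr)$. The inner sum is, after the substitution $g\to a^{-1}g$ (exactly as in the display \eqref{s19} in the proof of Proposition \ref{9}), equal to $K_{GL(2,q)}(\lambda;a^2)$. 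So the first task is simply to record
\begin{equation*}
w(c(a)) = \tfrac{1}{2}N - \tfrac{1}{2}K_{GL(2,q)}(\lambda;a^2).
\end{equation*}

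Next I would invoke Proposition \ref{3}, namely \eqref{s8}, which gives $K_{GL(2,q)}(\lambda;a^2) = qK(\lambda;a^2)^2 + q^2(q-1) = qK_2(\lambda;a^2)+q^3$. Then apply Proposition \ref{4} (equivalently Remark \ref{5}), which says $K(\lambda;a^2)=K(\lambda;a)$ and $K_2(\lambda;a^2)=K_2(\lambda;a)$ since $2=2^1$ is a power of two. Substituting $N=q(q-1)(q^2-1)=q(q^3-q^2-q+1)$ and simplifying, the $qK(\lambda;a)^2+q^2(q-1)$ form yields
\begin{equation*}
w(c(a)) = \tfrac{1}{2}\bigl(q(q^3-q^2-q+1) - qK(\lambda;a)^2 - q^2(q-1)\bigr) = \tfrac{1}{2}q\bigl(q^3-2q^2+1-K(\lambda;a)^2\bigr),
\end{equation*}
which is \eqref{s24}. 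For \eqref{s25}, I would instead use the $qK_2(\lambda;a)+q^3$ form, or equivalently substitute Theorem \ref{2}, $K_2(\lambda;a)=K(\lambda;a)^2-q$, into \eqref{s24}: replacing $K(\lambda;a)^2$ by $K_2(\lambda;a)+q$ gives $\tfrac{1}{2}q(q^3-2q^2+1-q-K_2(\lambda;a)) = \tfrac{1}{2}q(q^3-2q^2-q+1-K_2(\lambda;a))$, as claimed.

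This argument is essentially a bookkeeping computation, so there is no serious obstacle; the only points requiring care are the change of variables $g\to a^{-1}g$ (valid because $a\in\mathbb{F}_q^*$ and $GL(2,q)$ is closed under left multiplication by $a^{-1}I$, noting $Tr(a^{-1}g) = a^{-1}Tr(g)$ and $Tr((a^{-1}g)^{-1}) = aTr(g^{-1})$, so the exponent becomes $tr(a^{-1}\cdot a\,Tr{g} + a\cdot a\,Tr{g^{-1}})$ — wait, one must track the $a$ carefully: starting from $\lambda(a\,Tr{g}+a\,Tr{g^{-1}})$ and sending $g\mapsto a^{-1}g$ turns it into $\lambda(Tr{g}+a^{2}Tr{g^{-1}})$, matching \eqref{s19}), and the invariance $K(\lambda;a^2)=K(\lambda;a)$, which is exactly Proposition \ref{4}. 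The equivalence of \eqref{s24} and \eqref{s25} is then just Theorem \ref{2}.
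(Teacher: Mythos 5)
Your proposal is correct and follows essentially the same route as the paper's own proof: express $w(c(a))$ as $\tfrac{1}{2}\bigl(N-\sum_{g}\lambda(a(Tr{g}+Tr{g^{-1}}))\bigr)$, identify the sum as $K_{GL(2,q)}(\lambda;a^2)$ via $g\mapsto a^{-1}g$, then apply (\ref{s8}), (\ref{s9}), the value of $N$ in (\ref{s12}), and finally (\ref{s7}) for the second form. The bookkeeping, including the change-of-variables detail, checks out.
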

\begin{proof}
\begin{equation*}
\begin{split}
w(c(a))&=\frac{1}{2}\sum_{i=1}^{N}(1-(-1)^{tr(a(Tr{g_i}+Tr{g_i^{-1}}))})\\
&=\frac{1}{2}(N-\sum_{g\in GL(2,q)}\lambda(a(Tr{g}+Tr{g^{-1}})))\\
&=\frac{1}{2}(N-\sum_{g\in GL(2,q)}\lambda(Tr{g}+a^2Tr{g^{-1}}))\\
&=\frac{1}{2}(N-K_{GL(2,q)}(\lambda;a^2))\\
&=\frac{1}{2}(N-qK(\lambda;a)^2-q^2(q-1))~(cf.(\ref{s8}),~(\ref{s9}))\\
&=\frac{1}{2}q(q^3-2q^2+1-K(\lambda;a)^2)~(cf.(\ref{s12}))\\
&=\frac{1}{2}q(q^3-2q^2-q+1-K_2(\lambda;a)).~(cf.(\ref{s7}))
\end{split}
\end{equation*}
\end{proof}

Let $u=(u_1,\cdots,u_N)\in\mathbb{F}_2^N$, with $\nu_\beta$ 1's in
the coordinate places where $Tr{g_j}+Tr{g_j}^{-1}=\beta$, for each
$\beta\in\mathbb{F}_q$. Then we see from the definition of the code
$C(GL(2,q))$(cf. (\ref{s13})) that $u$ is a codeword with weight $j$
if and only if $\sum_{\beta\in\mathbb{F}_q}\nu_\beta=j$ and
$\sum_{\beta\in\mathbb{F}_q}\nu_\beta\beta=0$(an identity in
$\mathbb{F}_q$). As there are
$\prod_{\beta\in\mathbb{F}_q}\binom{n(\beta)}{\nu_\beta}$ many such
codewords with weight $j$,  we obtain the following result.

\begin{proposition}\label{15}
Let $\{C_j\}^N_{j=0}$ be the weight distribution of $C(GL$$(2,q))$,
where $C_j$ denotes the frequency of the codewords with weight $j$
in $C$. Then
\begin{equation}\label{s26}
C_j=\sum\prod_{\beta\in\mathbb{F}_q} \binom{n(\beta)}{\nu\beta},
\end{equation}
where the sum runs over all the sets of nonnegative integers
$\{\nu_\beta\}_{\beta\in\mathbb{F}_q}~(0\leq\nu_{\beta}\leq
n(\beta))$, satisfying
\begin{equation}\label{s27}
\sum_{\beta\in\mathbb{F}_q}\nu_\beta=j~ \text{and}~
\sum_{\beta\in\mathbb{F}_q}\nu_\beta\beta=0.
\end{equation}
\end{proposition}

\begin{corollary}\label{16}
Let $\{C_j\}_{j=0}^N$ be the weight distribution of $C(GL(2,q))$.
Then we have: $C_j=C_{N-j}$, for all $j$, with $0\leq j\leq N$.
\end{corollary}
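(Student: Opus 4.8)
The plan is to show that the all-ones vector $\mathbf{1} = (1,1,\dots,1)$ of length $N$ lies in the code $C(GL(2,q))$, for then $C$ is invariant under the translation $u \mapsto u + \mathbf{1}$, which sends a codeword of weight $j$ bijectively to a codeword of weight $N - j$, giving $C_j = C_{N-j}$ for all $j$.

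To verify $\mathbf{1} \in C$, recall from \eqref{s13} that $\mathbf{1} \in C$ if and only if $\mathbf{1} \cdot v = 0$, where $v = (Tr{g_1} + Tr{g_1}^{-1}, \dots, Tr{g_N} + Tr{g_N}^{-1})$. This dot product, being computed in $\mathbb{F}_2$ but with entries in $\mathbb{F}_q$, is really the sum $\sum_{g \in GL(2,q)} (Tr{g} + Tr{g^{-1}})$ taken in $\mathbb{F}_q$; more precisely, since the code is defined over $\mathbb{F}_2$ via the condition $u \cdot v = 0$ in $\mathbb{F}_q$, we need $\sum_{g \in GL(2,q)} (Tr{g} + Tr{g^{-1}}) = 0$ in $\mathbb{F}_q$. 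Equivalently — and this is cleaner — one checks that $tr\big(a \sum_{g \in GL(2,q)} (Tr{g} + Tr{g^{-1}})\big) = 0$ for all $a \in \mathbb{F}_q$, i.e. that $c(a) \cdot \mathbf{1} = 0$ for every dual codeword $c(a)$, which is the statement that every $c(a) \in C^\bot$ has even weight. By Lemma~\ref{14}, $w(c(a)) = \tfrac12 q(q^3 - 2q^2 + 1 - K(\lambda;a)^2)$ for $a \neq 0$, and $w(c(0)) = 0$. Since $q = 2^r$ with $r \geq 2$, the factor $q = 2^r$ already contributes enough powers of $2$: writing $w(c(a)) = 2^{r-1}(q^3 - 2q^2 + 1 - K(\lambda;a)^2)$, and noting $q^3 - 2q^2 + 1 - K(\lambda;a)^2$ is an integer, we get $2^{r-1} \mid w(c(a))$, hence $w(c(a))$ is even for $r \geq 2$. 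Thus every codeword of $C^\bot$ is orthogonal to $\mathbf{1}$, so $\mathbf{1} \in (C^\bot)^\bot = C$.

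Alternatively, and perhaps more in the spirit of the preceding propositions, one can argue directly from Proposition~\ref{12}: the number of codewords of weight $j$ in $C$ is counted in Proposition~\ref{15} by choosing $\nu_\beta$ coordinates among the $n(\beta)$ places with $Tr{g} + Tr{g^{-1}} = \beta$, subject to $\sum_\beta \nu_\beta = j$ and $\sum_\beta \nu_\beta \beta = 0$. Replacing each $\nu_\beta$ by $n(\beta) - \nu_\beta$ gives $\sum_\beta (n(\beta) - \nu_\beta) = N - j$, and the linear constraint becomes $\sum_\beta n(\beta)\beta - \sum_\beta \nu_\beta \beta = \sum_\beta n(\beta)\beta$; so this substitution is a bijection between the index sets for $C_j$ and $C_{N-j}$ provided $\sum_{\beta \in \mathbb{F}_q} n(\beta)\beta = 0$ in $\mathbb{F}_q$. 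That sum equals $\sum_{g \in GL(2,q)} (Tr{g} + Tr{g^{-1}})$, which is exactly the quantity shown to vanish above (or one can see it vanishes because $\mathbf{1}$ being in $C$ is equivalent to it). The product of binomial coefficients $\prod_\beta \binom{n(\beta)}{\nu_\beta} = \prod_\beta \binom{n(\beta)}{n(\beta) - \nu_\beta}$ is unchanged, so the bijection preserves the summand, giving $C_j = C_{N-j}$.

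The only genuinely non-routine point is establishing that $\sum_{g \in GL(2,q)}(Tr{g} + Tr{g^{-1}}) = 0$ in $\mathbb{F}_q$, equivalently that each $c(a)$ has even weight; I expect to handle this via the parity argument from Lemma~\ref{14} as above, using that $q = 2^r$ with $r \geq 2$ forces $2 \mid w(c(a))$. (One should note the hypothesis $r \geq 2$ is already in force throughout this section.) Everything else is the standard complementation bijection on codewords, so the proof is short; I would present it using the $\mathbf{1} \in C$ formulation as the main line and mention the direct combinatorial bijection as a remark.
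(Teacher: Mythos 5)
Your proof is correct, and in fact your second paragraph \emph{is} the paper's own proof: the paper establishes the corollary by the complementation $\nu_\beta \mapsto n(\beta)-\nu_\beta$ in Proposition \ref{15}, noting that this fixes the summands of (\ref{s26}) and the second constraint in (\ref{s27}) because $\sum_{\beta\in\mathbb{F}_q} n(\beta)\beta=0$. Where you genuinely differ is in the main-line packaging (the all-ones word: $\mathbf{1}\in C$, so $u\mapsto u+\mathbf{1}$ gives $C_j=C_{N-j}$) and, more substantively, in how the one nontrivial identity $\sum_{g\in GL(2,q)}(Tr\,g+Tr\,g^{-1})=\sum_{\beta}n(\beta)\beta=0$ is verified: you deduce it from the evenness of all dual weights, writing $w(c(a))=2^{r-1}\bigl(q^3-2q^2+1-K(\lambda;a)^2\bigr)$ via Lemma \ref{14} (with $w(c(0))=0$) and using non-degeneracy of the trace together with $(C^\perp)^\perp=C$; this is valid, since Lemma \ref{14} precedes the corollary and $K(\lambda;a)$ is an integer. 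The paper's intended check is, however, much shorter and needs fewer ingredients: by (\ref{s22}) every $n(\beta)$ is divisible by $q=2^r$, hence even, so each term $n(\beta)\beta$ already vanishes in characteristic $2$; no appeal to Lemma \ref{14}, to the weight formula, or to the hypothesis $r\geq 2$ is needed (only that $q$ is even). What your route buys is the cleaner conceptual statement that $C$ contains the all-ones vector, from which the symmetry $C_j=C_{N-j}$ is a standard coding-theoretic fact; what the paper's route buys is brevity and minimal hypotheses.
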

\begin{proof}
Under the replacements $\nu_{\beta}\rightarrow n(\beta)-\nu_\beta$,
for each $\beta\in\mathbb{F}_q$, the first equation in (\ref{s27})
is changed to $N-j$, while the second one in (\ref{s27}) and the
summands in (\ref{s26}) are left unchanged. Here the second sum in
(\ref{s27}) is left unchanged, since
$\sum_{\beta\in\mathbb{F}_q}n(\beta)\beta=0$, as one can see by
using the explicit expression of $n(\beta)$ in (\ref{s22}).
\end{proof}

\begin{theorem}\label{17} $($\cite{GJ}$)$
Let $q=2^r$, with $r\geq 2$. Then the range $R$ of $K(\lambda;a)$,
as a varies over $\mathbb{F}_q^*$, is given by
\begin{equation*}
R=\{t\in\mathbb{Z}~|~|t|<2\sqrt{q},~t\equiv-1(mod~4)\}.
\end{equation*}

In addition, each value $t\in R$ is attained exactly $H(t^2-q)$
times, where $H(d)$ is the Kronecker class number of $d$.
\end{theorem}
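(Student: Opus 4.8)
\emph{Overall approach.} The plan is to realize $K(\lambda;a)$ geometrically as (minus) the trace of Frobenius on an elliptic curve, and then to read off both the shape of the range and the multiplicities from the Deuring correspondence between $\mathbb{F}_q$-isomorphism classes of elliptic curves and ideal classes of imaginary quadratic orders. First I would attach to each $a\in\mathbb{F}_q^*$ the curve $E_a:y^2+xy=x^3+a$ over $\mathbb{F}_q$, which has $j$-invariant $a^{-1}\neq 0$ and is therefore ordinary. On the affine chart, substituting $u=y/x$ for $x\neq 0$ turns the equation into $u^2+u=x+ax^{-2}$, which has two solutions or none according as $tr(x+ax^{-2})=0$ or $1$; the fibre over $x=0$ is the single point $(0,\sqrt a)$, and there is one point at infinity. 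Using $tr(ax^{-2})=tr(a^{1/2}x^{-1})$ together with Proposition \ref{4} (applied with $n=2^{r-1}$, so that $K(\lambda;a^{1/2})=K(\lambda;a)$), this gives
\[
\#E_a(\mathbb{F}_q)=q+1+K(\lambda;a),
\]
so that $K(\lambda;a)=-t_a$, where $t_a=q+1-\#E_a(\mathbb{F}_q)$ is the trace of Frobenius of $E_a$.

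\emph{The range is contained in $R$.} By (\ref{s1}) (equivalently, by the Hasse bound) one has $|K(\lambda;a)|\le 2\sqrt q$, and equality would force $E_a$ to be supersingular, which is impossible since $j(E_a)\neq 0$; hence $|K(\lambda;a)|<2\sqrt q$. Ordinariness of $E_a$ forces $t_a$, and therefore $K(\lambda;a)$, to be odd. To pin down the class modulo $4$ I would analyze the $2$-torsion: the point $(0,\sqrt a)$ has order $2$, and because the chosen model has vanishing $a_2$-coefficient this point lies in $2E_a(\mathbb{F}_q)$, so $E_a(\mathbb{F}_q)$ acquires a point of order $4$ and $4\mid\#E_a(\mathbb{F}_q)=q+1+K(\lambda;a)$. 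Since $r\ge 2$ gives $4\mid q$, this yields $K(\lambda;a)\equiv-1\pmod 4$, hence $R\subseteq\{t\in\mathbb{Z}:|t|<2\sqrt q,\ t\equiv-1\pmod 4\}$.

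\emph{Surjectivity and multiplicities.} The map $a\mapsto j(E_a)=a^{-1}$ is a bijection of $\mathbb{F}_q^*$ onto the set of nonzero $j$-invariants; for each such $j$ there are exactly two $\mathbb{F}_q$-isomorphism classes of elliptic curves, namely $E_{1/j}$ and its quadratic twist, with traces $\pm t_{1/j}$, and the $2$-torsion computation of the previous step shows that the one satisfying $4\mid\#E(\mathbb{F}_q)$ is always $E_{1/j}$ itself. Hence, for $t\equiv-1\pmod 4$ with $|t|<2\sqrt q$,
\[
\#\{a\in\mathbb{F}_q^*:K(\lambda;a)=t\}=\#\{E/\mathbb{F}_q\ \text{up to iso}:\ t(E)=-t,\ 4\mid\#E(\mathbb{F}_q)\}.
\]
I would then invoke Deuring's theorem, in the form worked out for finite fields by Waterhouse and Schoof, which counts elliptic curves over $\mathbb{F}_q$ with a given admissible trace in terms of a Kronecker class number of the CM discriminant attached to that trace; after the bookkeeping of twists and of the local conditions at $2$, the right-hand side is exactly the quantity $H(t^2-q)$ of the statement. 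This shows that every $t\in R$ is attained, with the asserted multiplicity.

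\emph{The hard part.} Everything through the second paragraph is essentially a character-sum computation plus the Hasse bound. The real obstacle is the last step, where two points need genuine work: (i) verifying that, over every ordinary $j$-invariant, it is precisely the model with $a_2=0$ that acquires a rational point of order $4$ — an explicit $2$-descent in characteristic $2$, where $[2]$ is inseparable and the usual division formulas must be handled with care; and (ii) matching the resulting half of the count of isomorphism classes to the standard (Hurwitz–)Kronecker class number, tracking the local conditions at the prime $2$ that arise because the base field has characteristic $2$.
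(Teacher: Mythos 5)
Two remarks, one about context and one about substance. First, the paper offers no proof of this theorem at all: it is quoted verbatim from Lachaud--Wolfmann \cite{GJ}, so there is nothing internal to compare against. Your sketch essentially reconstructs the original Lachaud--Wolfmann argument, and its skeleton is sound: the point count $\#E_a(\mathbb{F}_q)=q+1+K(\lambda;a)$ for $E_a:y^2+xy=x^3+a$ is correct (the substitution $u=y/x$ and $tr(ax^{-2})=tr(a^{1/2}x^{-1})$ do exactly what you say); ordinarity of $E_a$ (only $j=0$ is supersingular in characteristic $2$) gives both the strict bound $|K(\lambda;a)|<2\sqrt q$ and oddness; and your order-$4$ claim is true and easier than you fear: the duplication formula for $y^2+xy=x^3+a$ gives $x(2P)=x(P)^2+a\,x(P)^{-2}$, so one needs a rational point with $x$-coordinate $a^{1/4}$, which exists because $tr(a^{1/4}+a^{1/2})=tr(a)+tr(a)=0$; its double is $(0,\sqrt a)$, whence $4\mid\#E_a(\mathbb{F}_q)$ and $K(\lambda;a)\equiv-1\pmod 4$. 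The twist bookkeeping is also right: the quadratic twist has $2(q+1)-\#E_a(\mathbb{F}_q)\equiv 2\pmod 4$, so for each $j\neq0$ the distinguished class is $E_{1/j}$, and curves of a fixed nonzero trace have pairwise distinct $j$-invariants.

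The genuine gap is in the last step, which you only assert. Carrying out the Deuring--Waterhouse--Schoof count as you propose, the number of $a$ with $K(\lambda;a)=t$ comes out as the Kronecker class number $H(t^2-4q)$ of the Frobenius discriminant $t^2-4q$, \emph{not} $H(t^2-q)$ as you (following the paper's statement) claim to recover; indeed $t^2-q$ need not even be negative (e.g.\ $q=4$, $t=3$), and for $q=4$ the value $t=-1$ is attained twice, which is $H(-15)=2$, whereas $H(t^2-q)=H(-3)=1$. So the quantity ``$H(t^2-q)$'' in the paper's statement is evidently a misprint for $H(t^2-4q)$, and your final sentence, which declares the bookkeeping lands exactly on $H(t^2-q)$, papers over precisely the part of the argument where this discrepancy would surface. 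To make the proof complete you must actually do that bookkeeping (Deuring's correspondence gives one isomorphism class per ideal class of each order containing $\mathbb{Z}[\pi_q]$, summing to $H(t^2-4q)$ classes of trace $-t$), conclude the multiplicity $H(t^2-4q)$, and note that this is the Lachaud--Wolfmann normalization of the statement.
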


Now, we get the following formula in (\ref{s28}), by applying the
formula in (\ref{s26}) to $C(GL(2,q))$, using the explicit values of
$n(\beta)$ in (\ref{s22}) and taking Theorem \ref{17} into
consideration.

\begin{theorem}\label{18}
Let $\{C_j\}^N_{j=0}$ be the weight distribution of $C(GL$$(2,q))$.
Then
\begin{equation}\label{s28}
C_j=\sum\binom{m_0}{\nu_0}\prod_{|t|<2\sqrt{q},~t\equiv-1(4)}\prod_{K(\lambda;\beta^{-1})=t}
\binom{m_t}{\nu_\beta}~(j=0,\cdots,N),
\end{equation}
where the sum is over all the sets of nonnegative integers
$\{\nu_\beta\}_{\beta\in\mathbb{F}_q}$ satisfying
$\sum_{\beta\in\mathbb{F}_q}\nu_\beta=j$ and
$\sum_{\beta\in\mathbb{F}_q}\nu_\beta\beta=0$,
\begin{equation*}
m_0=q(2q^2-2q-1),
\end{equation*}
and
\begin{equation*}
m_t=q(q^2-2q-1+t),
\end{equation*}
for all integers $t$ satisfying $|t|<2\sqrt{q}$ and $t\equiv-1(mod~
4)$.
\end{theorem}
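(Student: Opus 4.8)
The plan is to obtain (\ref{s28}) from Proposition \ref{15} by a purely bookkeeping argument: substitute the closed form for $n(\beta)$ and then reorganize the product over $\beta\in\mathbb{F}_q^{*}$ according to the value of the associated Kloosterman sum. Concretely, I would start from (\ref{s26}), written as
\[
C_j=\sum\binom{n(0)}{\nu_0}\prod_{\beta\in\mathbb{F}_q^{*}}\binom{n(\beta)}{\nu_\beta},
\]
where the outer sum runs over all tuples of nonnegative integers $\{\nu_\beta\}_{\beta\in\mathbb{F}_q}$ with $\sum_{\beta}\nu_\beta=j$ and $\sum_{\beta}\nu_\beta\beta=0$. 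By Proposition \ref{12} (that is, (\ref{s22})) we have $n(0)=q(2q^{2}-2q-1)=m_0$, while for $\beta\ne0$, $n(\beta)=q(q^{2}-2q-1+K(\lambda;\beta^{-1}))$, which depends on $\beta$ only through the integer $t:=K(\lambda;\beta^{-1})$; for such $\beta$ this equals $q(q^{2}-2q-1+t)=m_t$.

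The second step is to invoke Theorem \ref{17}. Since $\beta\mapsto\beta^{-1}$ is a bijection of $\mathbb{F}_q^{*}$, the set of values $\{K(\lambda;\beta^{-1}):\beta\in\mathbb{F}_q^{*}\}$ coincides with the range $R=\{t\in\mathbb{Z}:|t|<2\sqrt{q},\ t\equiv-1\ (\mathrm{mod}\ 4)\}$, and each $t\in R$ is realized for exactly $H(t^{2}-q)$ values of $\beta$; this multiplicity, however, is already accounted for by the inner product over $\{\beta:K(\lambda;\beta^{-1})=t\}$ and need not be made explicit in the final formula. Partitioning $\mathbb{F}_q^{*}=\bigsqcup_{t\in R}\{\beta:K(\lambda;\beta^{-1})=t\}$ accordingly, the factor $\prod_{\beta\in\mathbb{F}_q^{*}}\binom{n(\beta)}{\nu_\beta}$ turns into $\prod_{|t|<2\sqrt{q},\;t\equiv-1(4)}\prod_{K(\lambda;\beta^{-1})=t}\binom{m_t}{\nu_\beta}$, which together with $\binom{m_0}{\nu_0}$ gives precisely (\ref{s28}). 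I would also note that the side conditions $0\le\nu_\beta\le n(\beta)$ of Proposition \ref{15} are automatically enforced by the convention $\binom{b}{a}=0$ for $a>b$, and that each $m_t$ is a genuine positive integer because $q\ge4$ forces $q^{2}-2q-1+t\ge q^{2}-2q-1-2\sqrt{q}>0$.

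Since the argument is essentially a change of indexing in a product, there is no serious obstacle; the only points that require care are (i) verifying that $n(\beta)$ for $\beta\ne0$ is genuinely a function of the value $K(\lambda;\beta^{-1})$ alone, so that grouping the $\beta$'s by this value is legitimate, and (ii) correctly matching the description of the range of $K(\lambda;\cdot)$ in Theorem \ref{17} with the index set of the outer double product in (\ref{s28}). As a consistency check, one can confirm directly from (\ref{s22}) that $\sum_{\beta\in\mathbb{F}_q}n(\beta)\beta=0$, which is exactly the identity used in Corollary \ref{16}.
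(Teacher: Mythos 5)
Your proposal is correct and follows essentially the same route as the paper, which obtains (\ref{s28}) by applying (\ref{s26}), substituting the explicit values of $n(\beta)$ from (\ref{s22}), and invoking Theorem \ref{17} to index the product by the values $t$ of $K(\lambda;\beta^{-1})$. Your write-up merely makes explicit the bookkeeping (grouping the $\beta\in\mathbb{F}_q^{*}$ by the value $t$, and the convention $\binom{b}{a}=0$ for $a>b$ absorbing the constraint $\nu_\beta\le n(\beta)$) that the paper leaves implicit.
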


We now apply the Pless power moment identity in (\ref{s23}) to
$C(GL(2,q))^{\bot}$, in order to obtain the results in Theorem
\ref{1} (cf. (\ref{s2})-(\ref{s4})) about recursive  formulas.

Then the left hand side of that identity in (\ref{s23}) is equal to
\begin{equation}\label{s29}
\sum_{a\in\mathbb{F}_{q}^{*}}w(c(a))^h,
\end{equation}
with the $w(c(a))$ given either by (\ref{s24}) or by (\ref{s25}).

\noindent Using the expression of $w(c(a))$ in (\ref{s25}),
(\ref{s29}) is

\begin{equation}\label{s30}
\begin{split}
&(\frac{q}{2})^h\sum_{a \in
\mathbb{F}_{q}^{*}}(q^3-2q^2-q+1-K_2(\lambda;a))^h\\
=&(\frac{q}{2})^h\sum_{a\in\mathbb{F}_q^*}\sum_{l=0}^{h}(-1)^{l}\binom{h}{l}
(q^3-2q^2-q+1)^{h-l} K_2(\lambda;a)^{l}\\
=&(\frac{q}{2})^h\sum_{l=0}^{h}(-1)^{l}\binom{h}{l}
(q^3-2q^2-q+1)^{h-l} MK_2^{l}.
\end{split}
\end{equation}

\noindent Equivalently, using the expression of $w(c(a))$ in
(\ref{s24}), (\ref{s29}) is

\begin{equation}\label{s31}
(\frac{q}{2})^h\sum_{l=0}^{h}(-1)^{l}\binom{h}{l} (q^3-2q^2+1)^{h-l}
MK^{2l}.
\end{equation}

\noindent On the other hand, the right hand side of the identity in
(\ref{s23}) is
\begin{equation}\label{s32}
q\sum_{j=0}^{min\{N,h\}}(-1)^j C_j \sum_{t=j}^{h} t!
S(h,t)2^{-t}\binom{N-j}{N-t}.
\end{equation}

Our main results in Theorem \ref{1} (cf. (\ref{s2})-(\ref{s4})) now
follow by equating (\ref{s30}) and (\ref{s32}), and (\ref{s31}) and
(\ref{s32}). Also, one has to separate the term corresponding to
$l=h$ in (\ref{s30}) and (\ref{s31}), and note
$dim_{\mathbb{F}_2}C(GL(2,q))=r$.

Note here that, in view of (\ref{s7}), obtaining power moments of
2-dimensional Kloosterman sums is equivalent to getting even power
moments of Kloosterman sums.

\bibliographystyle{amsplain}

\end{document}